\documentclass[12pt]{article}
\usepackage{geometry}
\geometry{letterpaper}
\usepackage{graphicx}
\usepackage{amssymb}
\usepackage{amsmath}
\usepackage[colorlinks,linkcolor=blue,citecolor=blue,urlcolor=blue,
            bookmarks,bookmarksnumbered]{hyperref}
\usepackage{amsthm}
\usepackage{tikz-cd}
\theoremstyle{definition}
\newtheorem{dfn}[subsection]{Definition}
\theoremstyle{definition}
\newtheorem{thm}[subsection]{Theorem}
\theoremstyle{definition}
\newtheorem*{conv}{Conventions}

\renewcommand{\phi}{\varphi}

\newcommand{\id}{\text{id}}

\newcommand{\Hom}{\text{Hom}}

\newcommand{\isom}{\approx}
\newcommand{\C}{\mathcal{C}}

\newcommand{\AAlg}{A\text{-Alg}}

\newcommand{\Ab}{\textup{Ab}}
\newcommand{\ep}{\varepsilon}
\newcommand{\Z}{\mathbb{Z}}
\newcommand{\Q}{\mathbb{Q}}
\usepackage{xurl}



\title{Beck torsors, formally unramified objects, and K{\"a}hler differentials}
\author{Nicholas Mertes\footnote{n.mertes@umiami.edu}\\ \textit{Department of Mathematics,}\\\textit{University of Miami},\\\textit{Coral Gables, FL}}
\date{April 2021}							

\begin{document}
\maketitle

\begin{abstract}
Let $\C$ be a category with pullbacks. We define a \textit{Beck torsor} in $\C$ as a morphism $Z\to Y$ in $\C$ which is a torsor for a Beck module over $Y$. We say that an object $X$ of $\C$ is \textit{formally unramified} if, for every Beck torsor $Z\to Y$ in $\C$, the canonical map $\Hom_\C(X, Z)\to \Hom_\C(X, Y)$ is injective. If $A$ is a commutative ring with identity, then an $A$-algebra $B$ is formally unramified in the category of $A$-algebras if and only if the ring homomorphism $A\to B$ is formally unramified. Given that $A\to B$ is formally unramified if and only if $\Omega_{B/A} = 0$, we seek a similar classification for general formally unramified objects. We say that $\C$ has \textit{K{\"a}hler differentials} if, for each object $X$ of $\C$, the forgetful functor $\Ab(\C/X)\to \C/X$ from the category of Beck modules over $X$ has a left adjoint $\Omega: \C/X\to \Ab(\C/X)$. Our main result is that if $\C$ has K{\"a}hler differentials, then an object $X$ of $\C$ is formally unramified if and only if $\Omega_X$ is a zero object in $\Ab(\C/X)$.
\end{abstract}

\section{Introduction}

The study of integer solutions to polynomial equations has led to the development of rich mathematical theories, such as those used in effort to study Fermat equations of the type $x^n + y^n = z^n$. However, for an equation such as $x^2= 0$ whose only integer solution is $x=0$, one is compelled to explore non-integer solutions. For example, in the ring $\Z[\ep]/(\ep^2)$, we obtain the non-zero solution $\ep^2 = 0$. This kind of solution, known as an infinitesimal solution, also plays a significant role in mathematics. For example, the fundamental notions of formally unramified, formally etale, and formally smooth morphisms in algebraic geometry are expressed in terms of lifting criteria inspired by these infinitesimal solutions.

The goal of this paper is to initiate a program to extend the notions of formally unramified, formally etale, and formally smooth morphisms into a categorical setting in effort to begin developing algebraic geometry uniformly over any algebraic theory. The notion of an algebraic theory, which we take to be a category of algebras of a Lawvere theory \cite{Lawvere, nLabLawvere}, includes more than traditional algebraic objects. For example, there is an algebraic theory of $C^\infty$-rings \cite{MSIA} which contains all of classical differential geometry. Therefore, one potential application of our program would be to exhibit immersions, local diffeomorphisms, and submersions in differential geometry as precisely (not just analogously) the same phenomena as unramified, etale, and smooth morphisms in algebraic geometry. It appears that Schreiber \cite{Schreiber1, Schreiber2} makes a similar claim in the context of cohesive $\infty$-topoi, in the sense that he relates an abstract version of these morphisms to the traditional differential-geometric morphisms. However, the categorical interpretation of these morphisms appears to be so fundamental that such elaborate formalism is not needed to develop the core concepts. More speculatively, our program could be used to explore geometric objects over algebraic theories in which geometry is not typically formulated. For example, what is an algebraic stack over the category of groups?

In this paper, we focus only on formally unramified morphisms and defer a discussion of formally etale and formally smooth morphisms to a future work. In section \ref{Beck_torsors}, we review the notion of a Beck module and introduce Beck torsors which are a categorical interpretation of the classical notion of square-zero extensions. With this categorical version of square-zero extensions at hand, we introduce formally unramified objects in section \ref{Formally_unramified_objects}. It appears that Paugam \cite{Paugam} considered a similar definition of formally unramified objects in the context of sheaves on a site, but that definition is not referred to elsewhere in his book.

It is important to make sure that our definition of formally unramified objects is justified to carry the name. What properties do formally unramified objects share with formally unramified ring homomorphisms? A classical result in commutative algebra states that a ring homomorphism $A\to B$ is formally unramified if and only if $\Omega_{B/A} = 0$, where $\Omega_{B/A}$ is the module of K{\"a}hler differentials. Provided a category $\C$ is sufficiently nice (such as an algebraic theory), it is possible to define a Beck module $\Omega_X$ of K{\"a}hler differentials for each object $X$ of $\C$. In section \ref{Kahler_differentials}, we discuss this general notion of K{\"a}hler differentials and prove that $X$ is a formally unramified object of $\C$ if and only if $\Omega_X$ is a zero object in the category of Beck modules over $X$. This theorem provides some justification that our definition of formally unramified objects makes sense.

\begin{conv}
A \textit{ring} is a commutative ring with identity. A \textit{ring homomorphism} is an identity-preserving ring homomorphism. We fix a ring $A$ throughout the paper. An \textit{$A$-algebra} is a ring $B$ together with a ring homomorphism $\mu_B: A\to B$. If $B$ and $C$ are $A$-algebras, then a ring homomorphism $\phi: B\to C$ is called an \textit{$A$-algebra homomorphism} if $\phi\circ\mu_B = \mu_C$. We write $\AAlg$ for the category of $A$-algebras and $A$-algebra homomorphisms.
\end{conv}

\section{Beck torsors}\label{Beck_torsors}

We want to reformulate the notion of a formally unramified ring homomorphism in a way which can be interpreted in any category with pullbacks. We begin by recalling the traditional definition of square-zero extensions and formally unramified ring homomorphisms.

\begin{dfn}
An $A$-algebra homomorphism $\phi: D\to C$ is called a \textit{square-zero extension} if $\phi$ is surjective and $(\ker\phi)^2 = 0$.
\end{dfn}
\begin{dfn}
Let $B$ be an $A$-algebra. We say that $\mu_B: A\to B$ is \textit{formally unramified} (or simply that $B$ is formally unramified) if, for every square-zero extension $D\to C$ of $A$-algebras, the canonical map
\[
\Hom_{\AAlg}(B, D)\to \Hom_{\AAlg}(B, C)
\]
is injective.
\end{dfn}

The property of $\Hom_{\AAlg}(B, D)\to \Hom_{\AAlg}(B, C)$ being injective already makes sense for any category, but generalizing the notion of square-zero extension takes some work. The fundamental observation that makes this generalization possible is that $D\to C$ is a square-zero extension if and only if $D\to C$ is a torsor for a Beck module over $C$. Then we note that this kind of torsor can be interpreted in any category with pullbacks. We now make these claims precise.

\begin{dfn}
Let $\C$ be a category with pullbacks. Let $X$ be an object of $\C$. A \textit{Beck module} over $X$ is a morphism $M\to X$ in $\C$ which is an abelian group object in the overcategory $\C/X$. A morphism of Beck modules over $X$ is a morphism of abelian group objects. We write $\Ab(\C/X)$ for the category of Beck modules over $X$.
\end{dfn}

We require $\C$ to have pullbacks because this assumption implies that $\C/X$ has products. Since $\C/X$ automatically has a terminal object $\id_X: X\to X$, we therefore have sufficient structure to interpret group objects in $\C/X$.

A result due to Beck \cite{Beck} states that if $B$ is an $A$-algebra, then $\Ab(\AAlg/B)$ is equivalent to the category of $B$-modules. The main point of this equivalence is that every Beck module over $B$ is a split square-zero extension (isomorphic to a square-zero extension of the type $B\oplus M\to B$ for a $B$-module $M$). But what can we say about general square-zero extensions? In a previous work \cite{Cotorsors}, we showed that general square-zero extensions are the torsors for split square-zero extensions.\footnote{We actually showed that this is true for the category of rings rather than for the category of $A$-algebras. However, the details of the proof for $A$-algebras are the same provided that ``ring homomorphism" is replaced throughout with ``$A$-algebra homomorphism."} We now define torsors for Beck modules in a general category with pullbacks. This is possible because Beck modules are group objects.
\begin{dfn}
Let $\C$ be a category with pullbacks. A \textit{Beck torsor} in $\C$ is an effective epimorphism $Z\to Y$ in $\C$ together with a Beck module $M$ over $Y$ and a group action $\tau: M\times_Y Z\to Z$ in $\C/Y$ such that $(\tau, \pi_Z): M\times_Y Z\to Z\times_Y Z$ is an isomorphism in $\C/Y$.
\end{dfn}
Note that a Beck module $M\to Y$ is a torsor for itself, with group action given by the group addition $+: M\times_Y M\to M$. The requirement that $Z\to Y$ is an effective epimorphism means that
\[
Z\times_Y Z\rightrightarrows Z\to Y
\]
is a coequalizer diagram. The reason for this condition is that an effective epimorphism (not merely an epimorphism) is the correct analogue of a surjection of rings. Indeed, a ring homomorphism such as $\Z\to\Q$ is an epimorphism but not surjective. However, a ring homomorphism is surjective if and only if it is an effective epimorphism. Since square-zero extensions are surjective ring homomorphisms, using the condition of effective epimorphism in the definition of Beck torsors seems natural.

For later use, we record what the torsor property tell us on the level of Hom sets.
\begin{thm}\label{bijection_for_the_torsor_action}
Let $\C$ be a category with pullbacks. Let $Z\to Y$ be a Beck torsor in $\C$ with Beck module $M$ and group action $M\times_Y Z\to Z$. Then we have a bijection
\[
\Hom_\C(X, M)\times_{\Hom_\C(X, Y)} \Hom_\C(X, Z)\isom \Hom_\C(X, Z)\times_{\Hom_\C(X, Y)} \Hom_\C(X, Z)
\]
\end{thm}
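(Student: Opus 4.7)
The plan is to reduce the claim to the fact that the representable functor $\Hom_\C(X,-)$ preserves limits, together with the hypothesis that $(\tau,\pi_Z): M\times_Y Z\to Z\times_Y Z$ is an isomorphism in $\C/Y$.

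First, I would record the two pullback squares in $\C$ that define $M\times_Y Z$ and $Z\times_Y Z$, using the structure map $M\to Y$ of the Beck module on the one side and the torsor morphism $Z\to Y$ on the other. Applying $\Hom_\C(X,-)$ to each of these squares and using the fact that $\Hom$ out of $X$ commutes with finite limits in $\C$, I obtain canonical bijections
\[
\Hom_\C(X, M\times_Y Z) \isom \Hom_\C(X, M)\times_{\Hom_\C(X, Y)} \Hom_\C(X, Z)
\]
and
\[
\Hom_\C(X, Z\times_Y Z) \isom \Hom_\C(X, Z)\times_{\Hom_\C(X, Y)} \Hom_\C(X, Z),
\]
where the right-hand sides are literal pullbacks in $\Sets$ along the maps induced by $M\to Y$ and $Z\to Y$.

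Next, since $(\tau,\pi_Z): M\times_Y Z\to Z\times_Y Z$ is an isomorphism in $\C/Y$, in particular it is an isomorphism in $\C$, and hence the induced map
\[
\Hom_\C(X, M\times_Y Z)\to \Hom_\C(X, Z\times_Y Z)
\]
is a bijection of sets. Composing this bijection with the two canonical bijections from the previous step yields the desired bijection in the statement of the theorem.

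There is no real obstacle here; the argument is essentially a translation of ``$(\tau,\pi_Z)$ is an isomorphism'' through the limit-preserving functor $\Hom_\C(X,-)$. The only minor point of care is to check that the naturality of the canonical bijections intertwines the map $(\tau,\pi_Z)_*$ with the set-theoretic map $(f,g)\mapsto (\tau\circ(f,g), g)$ on the pullbacks of Hom sets, so that the resulting bijection agrees with the one implicitly described in the statement. This is immediate from the definitions of the projections and of $\tau$, viewed as a morphism in $\C/Y$.
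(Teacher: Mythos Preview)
Your proposal is correct and is essentially identical to the paper's own proof: the paper also uses the isomorphism $M\times_Y Z\isom Z\times_Y Z$ together with the universal property of pullbacks (i.e., $\Hom_\C(X,-)$ preserving pullbacks) to obtain the same chain of bijections. Your extra remark about naturality is a bit more careful than the paper, but the approach is the same.
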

\begin{proof}
Since $Z\to Y$ is a Beck torsor, we have that $M\times_Y Z\isom Z\times_Y Z$. The theorem then follows from the universal property of pullbacks. Indeed,
\[
\begin{split}
\Hom_\C(X, M)\times_{\Hom_\C(X, Y)} \Hom_\C(X, Z) &\isom \Hom_\C(X, M\times_Y Z) \\
&\isom \Hom_\C(X, Z\times_Y Z) \\
&\isom \Hom_\C(X, Z)\times_{\Hom_\C(X, Y)} \Hom_\C(X, Z).
\end{split}
\]
\end{proof}

\section{Formally unramified objects}\label{Formally_unramified_objects}

We now use Beck torsors as a categorical generalization of square-zero extensions to define formally unramified objects.

\begin{dfn}
Let $\C$ be a category with pullbacks. Let $X$ be an object of $\C$. We say that $X$ is \textit{formally unramified} if, for every Beck torsor $Z\to Y$ in $\C$, the canonical map $\Hom_\C(X, Z)\to \Hom_\C(X, Y)$ is injective.
\end{dfn}
Given that Beck torsors in $\AAlg$ coincide with square-zero extensions, it is clear that an $A$-algebra $B$ is a formally unramified object of $\AAlg$ if and only if the ring homomorphism $\mu_B: A\to B$ is formally unramified.

We now discuss an aspect of formally unramified ring homomorphisms which is sometimes not emphasized in the literature, but is important for this paper. Suppose that $B$ is a formally unramified $A$-algebra. Then, for every Beck module $C\oplus M\to C$ in $\AAlg$, the canonical map
\[
\Hom_{\AAlg}(B, C\oplus M)\to \Hom_{\AAlg}(B, C)
\]
is \textit{bijective}. Said differently, the lifting exists and is unique with respect to the \textit{split} square-zero extensions. So what distinguishes formally unramified ring homomorphisms from formally etale ring homomorphisms\footnote{Recall that a ring homomorphism is formally etale if the lifting exists and is unique with respect to \textit{all} square-zero extensions.} is the existence of square-zero extensions which are not split, that is, the existence of Beck torsors which are not Beck modules.

We now prove that this bijectivity with respect to Beck modules holds in general for formally unramified objects. Furthermore, it is an if and only if statement, meaning that this bijectivity fully characterizes formally unramified objects.
\begin{thm}\label{bijection_for_beck_modules}
Let $\C$ be a category with pullbacks. Let $X$ be an object of $\C$. Then $X$ is formally unramified if and only if, for each Beck module $M\to Y$ in $\C$, the canonical map
\[
\Hom_\C(X, M)\to \Hom_\C(X, Y)
\]
is bijective.
\end{thm}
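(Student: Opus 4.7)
The plan is to handle the two implications separately. The forward direction exploits the fact that every Beck module is itself a Beck torsor, while the backward direction relies on Theorem \ref{bijection_for_the_torsor_action} to transport the injectivity condition from an arbitrary Beck torsor back to its underlying Beck module.

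For the $(\Rightarrow)$ direction, I would note that a Beck module $M \to Y$ becomes a Beck torsor with group action its own addition $+: M\times_Y M \to M$. The structure map $M\to Y$ is an effective epimorphism because the zero section $0: Y\to M$ exhibits it as a split epi, and split epis are effective epis in any category with pullbacks (the section lets one explicitly construct the required coequalizing map from any $f$ factoring through the kernel pair). The formally unramified hypothesis then supplies injectivity of $\Hom_\C(X, M)\to \Hom_\C(X, Y)$. Surjectivity is immediate: any $\alpha: X\to Y$ lifts via $X \xrightarrow{\alpha} Y \xrightarrow{0} M$.

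For the $(\Leftarrow)$ direction, I would let $Z\to Y$ be a Beck torsor with Beck module $M$ and action $\tau: M\times_Y Z\to Z$, and suppose $f, g: X\to Z$ share a common image $h: X\to Y$. The pair $(f, g)$ lies in $\Hom_\C(X, Z)\times_{\Hom_\C(X, Y)} \Hom_\C(X, Z)$, so by Theorem \ref{bijection_for_the_torsor_action} it pulls back to a unique pair $(m, g)$ with $\tau\circ(m, g) = f$, where $m: X\to M$ automatically lies over $h$. But the composite $m_0 := 0\circ h: X\to Y\to M$ is another element of $\Hom_\C(X, M)$ lying over $h$, so the bijectivity hypothesis forces $m = m_0$. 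Since the identity element acts trivially, $f = \tau\circ (m_0, g) = g$, which is the desired injectivity.

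The main obstacle I anticipate is purely a translation from this pointwise language into the formal diagram language of group objects and their actions in $\C/Y$, specifically verifying that the identity section of the Beck module acts as the identity on any torsor for it. This amounts to a short diagram chase with the unit axiom of a group action and requires no substantive computation, so I do not foresee any deeper difficulty.
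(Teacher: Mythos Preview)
Your proposal is correct and follows essentially the same approach as the paper: the forward direction is identical (Beck modules are Beck torsors, and surjectivity comes from the zero section), and the backward direction likewise uses Theorem~\ref{bijection_for_the_torsor_action} to reduce injectivity for an arbitrary Beck torsor to the bijectivity hypothesis for its underlying Beck module. The only cosmetic difference is that the paper packages the backward direction as a single chain of bijections $\Hom_\C(X,Z)\times_{\Hom_\C(X,Y)}\Hom_\C(X,Z)\isom\Hom_\C(X,Z)$ and observes that both $(\alpha,\beta)$ and $(\beta,\beta)$ land on $\beta$, whereas you unwind this explicitly via the unit axiom of the torsor action; these are the same argument.
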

\begin{proof}
First assume that $X$ is formally unramified. Let $\phi: M\to Y$ be a Beck module in $\C$. Since every Beck module is a Beck torsor, we know that
\[
\Hom_\C(X, M)\to \Hom_\C(X, Y)
\]
is injective. It remains to show that this map is surjective. Let $\alpha: X\to Y$ be a morphism in $\C$. Since $\phi$ is a Beck module, $\phi$ is an abelian group object in $\C/Y$. Note that $\id_Y: Y\to Y$ is a terminal object of $\C/Y$, and hence part of the abelian group structure on $\phi$ is a unit $u_M: Y\to M$ such that $\phi\circ u_M = \id_Y$. Define $\psi: X\to M$ such that $\psi = u_M\circ\alpha$. Then the image of $\psi$ under $\Hom_\C(X, M)\to \Hom_\C(X, Y)$ is $\phi\circ \psi = \phi\circ u_M\circ\alpha = \id_Y\circ\alpha = \alpha$. Therefore, the map is surjective.

Now assume that, for each Beck module $M\to Y$ in $\C$, the canonical map
\[
\Hom_\C(X, M)\to \Hom_\C(X, Y)
\]
is bijective. Let $\gamma: Z\to Y$ be a Beck torsor with Beck module $M$ and group action $M\times_Y Z\to Z$. To show that $X$ is formally unramified, we need to show that the canonical map
\[
\Hom_\C(X, Z)\to \Hom_\C(X, Y)
\]
is injective. Let $\alpha, \beta\in \Hom_\C(X, Z)$ be such that $\gamma\circ\alpha = \gamma\circ\beta$. Then
\[
(\alpha, \beta)\in \Hom_\C(X, Z)\times_{\Hom(X, Y)}\Hom_\C(X, Z)
\]
Using Theorem \ref{bijection_for_the_torsor_action} and the assumed bijection $\Hom_\C(X, M)\isom \Hom_\C(X, Y)$, we have that
\[
\begin{split}
\Hom_\C(X, Z)\times_{\Hom(X, Y)}\Hom_\C(X, Z)&\isom \Hom_\C(X, M)\times_{\Hom_\C(X, Y)} \Hom_\C(X, Z) \\
&\isom \Hom_\C(X, Y)\times_{\Hom_\C(X, Y)} \Hom_\C(X, Z) \\
&\isom \Hom_\C(X, Y\times_Y Z) \\
&\isom \Hom_\C(X, Z),
\end{split}
\]
where the final bijection $\Hom_\C(X, Z)\times_{\Hom(X, Y)}\Hom_\C(X, Z)\to \Hom_\C(X, Z)$ maps $(\alpha, \beta)$ to $\beta$. Now, since $(\beta, \beta)$ also maps to $\beta$, we have that $(\alpha, \beta) = (\beta, \beta)$ and hence $\alpha = \beta$.
\end{proof}

\section{K{\"a}hler differentials}\label{Kahler_differentials}

Finally, we discuss K{\"a}hler differentials and their relationship with formally unramified objects. This categorical definition of K{\"a}hler differentials was introduced by Quillen \cite{Quillen_book, Quillen_notes}. We first reformulate the ordinary notion of K{\"a}hler differentials in a way which suggests the categorical generalization.

Let $B$ be an $A$-algebra. We have a forgetful functor $\Ab(\AAlg/B)\to \AAlg/B$. Under the identification of $\Ab(\AAlg/B)$ with the category of $B$-modules, the left adjoint $\Omega: \AAlg/B\to \Ab(\AAlg/B)$ is such that, for each object $C\to B$ of $\AAlg/B$, $\Omega(C\to B) = \Omega_{C/A}\otimes_C B$. In particular, $\Omega(\id_B) = \Omega_{B/A}\otimes_B B\isom \Omega_{B/A}$. This classical result motivates the following definition in general.

\begin{dfn}
Let $\C$ be a category with pullbacks. We say that $\C$ has \textit{K{\"a}hler differentials} if, for each object $X$ of $\C$, the forgetful functor $\Ab(\C/X)\to \C/X$ has a left adjoint $\Omega: \C/X\to \Ab(\C/X)$. We write $\Omega_X = \Omega(\id_X)$ and refer to $\Omega_X$ as the \textit{module of K{\"a}hler differentials} of $X$.
\end{dfn}

By the adjunction of $\Omega$ and the forgetful functor, we have a bijection
\[
\Hom_{\Ab(\C/X)}(\Omega_X, M)\isom \Hom_{\C/X}(X, M)
\]
for each object $X$ of $\C$ and each Beck module $M$ over $X$. We are now ready to state and prove the main result of this paper.

\begin{thm}
Let $\C$ be a category with K{\"a}hler differentials. Let $X$ be an object of $\C$. Then $X$ is formally unramified if and only if $\Omega_X$ is a zero object in $\Ab(\C/X)$.
\end{thm}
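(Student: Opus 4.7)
The plan is to use the defining adjunction of $\Omega_X$ to translate the ``zero object'' condition into a uniqueness-of-sections statement, and then match that statement with the bijection criterion of Theorem \ref{bijection_for_beck_modules}.

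First, I would observe that $\Ab(\C/X)$ always has a zero object: the terminal object $\id_X: X \to X$ of $\C/X$ carries a unique abelian group structure, making it both initial and terminal in $\Ab(\C/X)$. A standard fact in any category with a zero object is that an object $N$ is itself a zero object if and only if $\Hom(N, M)$ is a singleton for every $M$ (one direction is Yoneda; the other follows by composing the unique maps $N\to 0$ and $0\to N$ both ways). Applying this to $N = \Omega_X$ and using the defining adjunction
\[
\Hom_{\Ab(\C/X)}(\Omega_X, M) \isom \Hom_{\C/X}(\id_X, M),
\]
we conclude that $\Omega_X$ is a zero object precisely when every Beck module $M \to X$ admits a unique section.

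Next I would bridge ``unique section over $X$'' to the bijection of hom-sets for every $Y$. Given a Beck module $\phi: M \to Y$ and a morphism $\alpha: X \to Y$, the pullback $\alpha^* M := X \times_Y M \to X$ is again a Beck module, since the pullback functor $\alpha^*: \C/Y \to \C/X$ preserves finite limits and hence abelian group objects. By the universal property of the pullback, sections of $\alpha^* M \to X$ are in natural bijection with morphisms $f: X \to M$ satisfying $\phi\circ f = \alpha$, that is, with the fiber of $\Hom_\C(X, M) \to \Hom_\C(X, Y)$ above $\alpha$. Therefore ``every Beck module over $X$ has a unique section'' is equivalent to ``for every Beck module $M\to Y$, every fiber of $\Hom_\C(X, M) \to \Hom_\C(X, Y)$ is a singleton,'' which is the bijectivity condition of Theorem \ref{bijection_for_beck_modules}. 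Combining the two translations gives the theorem.

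I do not anticipate a serious obstacle. The only genuine bookkeeping is verifying that the pullback of a Beck module is a Beck module, and that sections of $\alpha^* M \to X$ correspond to lifts of $\alpha$ to $M$, both of which follow directly from the universal property of the pullback.
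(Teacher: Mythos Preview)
Your proposal is correct and follows essentially the same route as the paper: both arguments rely on the adjunction defining $\Omega_X$, the fact that pullback along $\alpha: X\to Y$ carries Beck modules to Beck modules with sections corresponding to lifts of $\alpha$, and the reduction to Theorem~\ref{bijection_for_beck_modules}. The only cosmetic difference is packaging: the paper treats the two implications separately, while you chain three biconditionals ($\Omega_X$ zero $\Leftrightarrow$ unique section over $X$ $\Leftrightarrow$ bijection for all $Y$ $\Leftrightarrow$ formally unramified), which is arguably tidier but uses exactly the same ingredients.
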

\begin{proof}
First assume that $\Omega_X$ is a zero object in $\Ab(\C/X)$. To show that $X$ is formally unramified, Theorem \ref{bijection_for_beck_modules} tells us it suffices to show that, for each Beck module $M\to Y$ in $\C$, the canonical map
\[
\Hom_\C(X, M)\to \Hom_\C(X, Y)
\]
is bijective. Let $\phi: M\to Y$ be a Beck module in $\C$. Let $\psi\in\Hom_\C(X, Y)$. We want to show that there exists a unique $\alpha\in \Hom_\C(X, M)$ such that $\phi\circ\alpha = \psi$. Consider the pullback Beck module $\psi^\ast M = X\times_Y M$ over $X$. Then
\[
\Hom_{\Ab(\C/X)}(\Omega_X, \psi^\ast M)\isom \Hom_{\C/X}(X, \psi^\ast M).
\]
Since $\Omega_X$ is a zero object in $\Ab(\C/X)$, $\Hom_{\Ab(\C/X)}(\Omega_X, \psi^\ast M)$ has exactly one element and hence $\Hom_{\C/X}(X, \psi^\ast M)$ has exactly one element. The universal property of the pullback digram
\[
\begin{tikzcd}
  X
  \arrow[drr, bend left, "\alpha"]
  \arrow[swap, ddr, bend right, "\id_X"]
  \arrow[dr, dotted] & & \\
    & \psi^\ast M \arrow[r] \arrow[d]
      & M \arrow[d, "\phi"] \\
& X \arrow[r, "\psi"] & Y
\end{tikzcd}
\]
tells us that there exists a unique $\alpha: X\to M$ such that $\phi\circ\alpha = \psi\circ\id_X = \psi$.

Now assume that $X$ is formally unramified. We want to show that $\Omega_X$ is a zero object of $\Ab(\C/X)$. Since $\id_X: X\to X$ is a zero object, it suffices to show that $\Omega_X\isom \id_X$. Since any two initial objects are isomorphic, it suffices to show that $\Omega_X$ is initial. Let $M\to X$ be a Beck module over $X$. Since $X$ is formally unramified, Theorem \ref{bijection_for_beck_modules} tells us that
\[
\begin{split}
\Hom_{\Ab(\C/X)}(\Omega_X, M) &\isom \Hom_{\C/X}(X, M) \\
&\isom \Hom_{\C/X}(X, X).
\end{split}
\]
Since $\Hom_{\C/X}(X, X)$ has exactly one element, $\Hom_{\Ab(\C/X)}(\Omega_X, M)$ has exactly one element and hence $\Omega_X$ is an initial object of $\Ab(\C/X)$.
\end{proof}

\section{Conclusion}

We showed that if $\C$ is a category with K{\"a}hler differentials, then an object $X$ of $\C$ is formally unramified if and only if $\Omega_X$ is a zero object in the category of Beck modules over $X$. We did not address the issue of which categories actually have K{\"a}hler differentials. The good news is that any locally presentable category, such as an algebraic theory (a category of algebras of a Lawvere theory), has K{\"a}hler differentials. This can be shown by using the locally presentable adjoint functor theorem \cite{LPAC, nLabAdjointFunctor}. So, if our goal for now is to develop algebraic geometry over algebraic theories, then we need not be concerned with the issue of existence of K{\"a}hler differentials. However, it would be interesting in the future to find an exhaustive classification of which categories have K{\"a}hler differentials.

Moving forward, an attempt will soon be made to introduce and study formally etale and formally smooth objects in a manner similar to what has been done in the current paper. The notion of finite presentation also makes sense categorically, meaning that we can define traditional unramified, etale, and smooth objects (without the ``formally"). Long term projects include finding out whether immersions, local diffeomorphisms, and submersions can be described in this language via the algebraic theory of $C^\infty$-rings, and also using these categorical definitions to introduce and study analogues of algebraic spaces and stacks over algebraic theories.

\section{Acknowledgements}

I originally had the idea for this paper while reading the mathoverflow question \cite{Overflow}. Indeed, as a special case of the main theorem of this paper, we answer this question by achieving a purely categorical proof of the classical fact that a ring homomorphism $A\to B$ is formally unramified if and only if $\Omega_{B/A} = 0$.

\end{document}